\def\d{\mathrm{d}}
\def\e{\mathrm{e}}
\def\R{\mathbb{R}}
\def\X{\mathcal{X}}
\def\contract{\rfloor}
\def\var{{\rm{Var}}}
\newtheorem{theorem}{Theorem}
\title{\LARGE \bf
The Barycenter Method for Direct Optimization: an Overview
}
\author{Felipe~Pait
\thanks{Universidade de S\~{a}o Paulo, Brazil; {\tt\footnotesize pait@usp.br}.}%
}
\begin{document}


\maketitle
\thispagestyle{empty}
\pagestyle{empty}


\begin{abstract} A randomized version of the recently developed barycenter method for derivative--free optimization  has desirable properties of a gradient search. We developed a complex version to avoid evaluations at high--gradient points. The method, which is also applicable to non--convex and to non--smooth functions, is parallelizable in a natural way and shown to be robust under noisy measurements. The goal of this paper is to present an overview of the method, whose properties  make it particularly useful in control applications. 
\end{abstract}

\section{INTRODUCTION}

The recently developed barycenter method for direct optimization has properties that make it particularly promising in control applications. These properties include: 
\begin{enumerate}
  \item The barycenter method is a form of derivative--free optimization: it aims at finding extremal points of a function whose  mathematical expression is not precisely known. In Section~\ref{sec:derivative-free} we give an overview of derivative--free optimization. We believe that the informal discussion, although somewhat long--winded, can be helpful as an introduction to the field and as an argument in favor of a direct optimization method which has a grounding in closed--loop controls thinking.
  \item The method can be described both in a recursive, algorithmic manner, and in a closed form or batch expression which is suitable for mathematical analysis. These expressions, given in Section~\ref{sec:recurse-and-batch}, provide the method with a transparent interpretation, in contrast with the ad--hoc justifications of many other direct search procedures.
  \item The method is compatible with  other well--established search procedures, and thus can make use of previous understanding of what works in a given situation. On the other hand, if no a priori knowledge is available, a purely random local search strategy may be employed. In this most unfavorable of situations, the barycenter method behaves in a gradient--descent manner. This is the content of Theorem~\ref{thm:average}, proved in Section~\ref{sec:theorems}.
  \item Theorem~\ref{thm:variance} in Section~\ref{sec:further} is a formula for the variance of the estimate when a random search is employed, and Theorem~\ref{thm:complex-interference}  contains an important property of the complex version of the barycenter method. It is our belief that the complex version, which is yet to be more fully tested, can behave in a most desirable way for a variety of applications, including to controls.
  \item The barycenter method is intrinsically applicable to non--differentiable functions and it is robust to measurement noise, as shown in Theorem~\ref{thm:under-noise},  stated in Section~\ref{sec:further}.
  \item The barycenter search is performed in convex sets: the convex hull of all test points. The method is applicable to non--convex functions, although inevitably the existence of local minima   presents a problem for any global optimization scheme. 
\end{enumerate}

Proofs of Theorems~\ref{thm:variance}, \ref{thm:complex-interference}, and \ref{thm:under-noise}, stated in Section~\ref{sec:further}, are available in the arXiv \cite{2018bary-arXiv}. Simulations are presented and interpreted in Section~\ref{sec:simula}. Results,  applications, and directions for further work, specially applications to direct and combined direct--indirect adaptive control, are discussed in Section~\ref{sec:discussion}.

\section{DIRECT OPTIMIZATION IN CONTEXT}\label{sec:derivative-free}

The barycenter method, which will be expressed by formula~\eqref{eq:barycenter} in \ref{sec:recurse-and-batch}, is suitable for {direct optimization, once scorned} \cite{once-scorned-wright}, now a respectable research area, both for its scientific challenges and practical applications. Also known as  \textsl{derivative--free optimization}, it deals with the search for extrema of a given function, employing only the values of the function and not its mathematical expression. 
When the 1st and perhaps also 2nd derivatives of the function are available, use of gradients and Hessians leads to steepest--descent and Newton--like search algorithms. Often however derivatives are costly or impossible to compute. The challenge in direct optimization is to obtain algorithms with comparable performance, without knowledge of the derivatives. 

The barycenter method has been employed successfully to tune filters  in system identification \cite{MOLI-transactions}, see also \cite{RomanoCDC:2014}. 
The filter parameters are additional structure parameters that need to be chosen before the model parameters themselves can be optimized, and may  be said to perform a role similar to that of hyper--parameters in machine learning. With that experience in mind, the use of the barycenter method  to tune hyper--parameters seems worth exploring.
A continuous--time version of the barycenter algorithm was analyzed in \cite{pait2014reading}. Some aspects of the method and its applications were presented at SIAM conferences \cite{SiamSDiego2014,SiamBoston2016}. 

The material that follows is a discussion of derivative--free optimization in the context of the  more familiar methods of nonlinear programming, and  of the barycenter method in the context of derivative--free optimization, always with a point of view that is appropriate for the kinds of problems that may appear in control theory and applications.  Although the work discussed in this paper is essentially on static, nonlinear mathematical optimization theory, we judge that the method has potential for applications  to closed--loop control of dynamical  systems.
The recent books  \cite{derivative-free-book,audet2017derivative} provide more complete expositions of derivative--free optimization and serve as good entry points to the literature. 
Section~\ref{sec:derivative-free} is meant as an informal, intuitive digression, and can be skipped by the reader interested in the mathematical properties of the barycenter method.

\subsection{Nonlinear optimization }
Nonlinear programming methods usually start from the assumption that a mathematical expression for the function being minimized exists. This is often not the case. In many problems, the physical or other scientific principles behind the problem suggest general properties of the function $f$, but do not supply exact values. Perhaps determining the function would require extensive modeling work that is costly or impossible to perform. Perhaps the function changes over time with the conditions of operation of some machinery, or its precise values are subject to some uncertainty or corrupted by measurement noise. In this case we may need to avail ourselves of some method for direct, also called derivative--free, optimization.

What we have is an oracle: an experiment, or perhaps a computer simulation, which will supply the value of $f(x_i)$ at a point $x_i$ whenever questioned. Each oracle query has a cost: performing the experiment will need human labor, a simulation will take computer time. Also perhaps the act of trying out a certain parameter guess will have an effect on the process being studied, which may be undesirable if the value of $f$ is high. We have in mind particularly the case of real--time decisions. For example suppose we are trying to optimize the parameters of a feedback process control device. In the time during which we tried out a ``bad'' controller, the performance of the process was poor. Now we will have learned to avoid that particular set of parameters, at the cost of allowing for poor control during a certain interval. The criteria by which we judge a direct optimization method may be different from the usual criteria for numerical optimization, which focus exclusively on computational complexity and demands, rather than on the values attained by the objective function during the search procedure.

\subsection{Derivative--free optimization}

One way to approach the problem would be to estimate the function $f$ using a sequence of oracle test queries. Whether this is promising depends crucially on our previous knowledge about the shape of the function and its properties. If the function can be described with a small number of parameters, and those can be computed on the basis of a limited number of polled values, then such a method may be quite effective. After a mathematical expression is derived, optimization may proceed very much along the lines of optimization algorithms for known functions.

If however our prior knowledge is limited, and the objective needs to be expressed in terms of a general--purpose functional approximation method, then the amount of data necessary to obtain a reliable functional expression can become excessive. More often than not, the number of experiments or simulations needed to obtain a functional approximation will become larger than what is necessary to simply find a minimum. Among the most successful methods that try to combine functional approximation with search for optimizers is Bayesian learning, which is described in an extensive literature.


If one wants to avoid the extra work of estimating the complete function $f$ using oracle queries, one approach is to emulate the nonlinear programming methods, using the function values to estimate derivatives. Derivatives are however notoriously difficult to measure using the differences between the values of the function in neighboring points. If the points are too far, then what is being estimated is not the value of the derivative; if too close, then the difference is dominated by noise or numerical errors. As we know from our control theory and experience, taking derivatives of measured signals rarely gives solid results.

With this in mind, many algorithms for derivative-free optimization have been studied in the literature. Those include methods based on  biological or evolutionary analogies for the exploration of the search space; methods using hypercubes or simplexes to bound regions where the minima may be contained; search in the direction of coordinates or using other patterns; trust--region methods using local linear or quadratic approximations to the functions; and others.

\subsection{Methods for derivative--free optimization}

Let's revisit some usual direct optimization techniques to try to understand where the barycenter method fits. Recall that the majority of the literature in mathematical optimization deals with problems where the goal to be optimized is formulated as a well--defined mathematical function, of which we can compute derivatives and second derivatives, and perform other useful operations. It is often not the case that the expressions are available. Then we are forced to look for minima (or maxima) using measurements of the value of the goal.
%
%
Derivative--free optimization often proceeds along the following lines:
\begin{itemize}
\item Imitate conventional nonlinear programming methods, approximating derivatives using measured values.
\item Estimate the function itself using sampled measurements, and then use various methods on the estimated function as if we were certain that the estimate was correct.
\item Specify a recursive algorithm for generating a sequence of guesses of where the minimum might be located, and endeavor to show that the procedure converges to an acceptable answer to the minimization question.
\end{itemize}

Let us consider each of these alternatives. It need not be argued to an audience familiar with automatic control that finite--differences approximations to derivatives don’t often result in useful techniques. Computing derivatives experimentally is not computationally or algorithmically difficult; it is simply useless. The realities of measurement noise and procedural errors will make any approximation prone to errors and in need of extensive filtering, which requires repetitive sampling, and increases the number of measurements needed.

Functional estimates are often based on explicit parameterizations, which may be global or use a patchwork of local functions defined within trust regions. Local parameterizations may be based on linear, quadratic, sigmoidal, Gaussian, or fuzzy--logic type basis functions. Global parameterizations could use familiar power or sinusoidal series. Some of the most successful methods that consider global parameterizations are the Bayesian learning ones. There also exist so-called “nonparametric models”, where the parameterizations are given implicitly by the assumptions made in generating the estimates and the algorithm that produces the sequence of probes, the dimension of the parameter space being very large, perhaps uncountably so.

Effectiveness of the procedure depends on how well the model class matches the class of functions being estimated. If we have workable prior knowledge about the general shape or mathematical expression of the function being optimized, then each sample will contribute to the knowledge of its overall behavior, and learning of the function itself can proceed economically. On the other had if the goal function does not match well the prior assumptions embedded into the parameterization, learning its overall shape will require extensive probing across all of its domain. It becomes likely that this procedure will be overwork if our goal is simply to find a minimum value of the function, rather than a precise global model for all of its values.

Search algorithms for derivative--free optimization have been engineered using a variety of different principles. Some use coordinate patterns to pick the search directions, or geometrical constructs such as hypercubes or other polytopes to progressively shrink the portion of the search space under consideration. In the Nelder--Mead method, one of the oldest and best known in direct optimization, the polytopes are triangles, tetrahedra, or higher--dimensional simplexes. Another category of algorithms are inspired by the individual or collective behavior of animals or biological phenomena such as recombination of genetic material, mutation, and evolution.

Many of the techniques make use of randomization, and even (almost completely) random searches have been considered. All of them provide good ideas and intuition to help specify the algorithms, however the narrative of how they operate often sounds like a just--so story. It is often not clear that Nature runs those procedures with the goal of finding minima and maxima, nor does the motivation furnish convincing arguments for why the extrema will be reached by the procedure. Proving convergence is a fruit of labor, and may require exacting assumptions on the goal functions.


Its is partly in response to the considerations above that the barycenter method has been developed. Its analysis follows from the equivalence between the batch and recursive formulations in \ref{sec:recurse-and-batch}. While the recursive version is flexible enough to incorporate any search mechanism that is found to have merits for a particular problem, the batch formula provides a closed expression that serves to analyze the properties of the method, in particular its robustness to noise and non--differentiability.

One of the desirable properties of gradient--based methods is that they incorporate naturally the concept that points where the derivative is large should be skipped over quickly, at least in the case of differentiable goal functions. This is not an easy notion to consider in most of the derivative--free methods. The complex version of the barycenter method has the property that points with large values of the derivative are given a low weighting. If they cannot be avoided without explicitly taking derivatives, at least the method can be set up so that tests at high-derivative points will not lead to a big waste of resources during the course of the search.

\subsection{The barycenter method}


The barycenter method has a very straightforward rationale. Search points are given an exponential weight, which is large for points where the function has low values and small where the goal function is small. The search points are combined to produce an estimate for the minimum, deemed to be  at their center of mass.

The method has equivalent batch and recursive formulations. The equivalence of the formulations provides an algorithmic approach and facilitates mathematical analysis of the properties.

The barycenter method may be used to combine a sequence of test points independently of how the sequence was generated. It is not in opposition to the search algorithms previously studied in the literature --- any of them, or more than one of them, can be used to generate the test points. In the same way that different approaches can be combined, the barycenter method is naturally parallelizable.

If no previous knowledge exists, or if one decides not to use the existing methods, then it is reasonable to use a purely random search to generate the sequence of test points recursively. In this case it can be shown that the barycenter method produces a sequence of steps that follows a gradient descent pattern, without the need to compute or estimate gradients. Theorem~\ref{thm:average} gives a proof of this statement. Theorem~\ref{thm:variance} shows that the variance of the step size is reduced with respect to the variance of the randomized search, which is an indication of the method’s robustness.

The barycenter method has a complex version which is inspired by Feynman's interpretation of quantum mechanics. The advantage of the complex version is that it avoids and discounts tests made at points where the derivative of the goal function is high. Recall that such points do not fulfill the necessary conditions for minimality, however they cannot be a priori excluded from the search if we are restricted to using derivative--free methods. Using a form of destructive interference between nearby points, the complex version of the barycenter method goes for the second best option, which consists in giving lower weight to measurements made at high derivative points, as shown in Theorem~\ref{thm:complex-interference}.

Another advantage is that  the method is by construction tolerant to noise, and to measurement and numerical errors. Theorem~\ref{thm:under-noise} provides approximations for the mean bias and variance of the estimate of the minimum introduced by the presence of noise. The analysis of the method’s behavior in the presence of noise is facilitated by the simple expressions for its recursive and batch versions, and by their straightforward equivalence. Besides easing the mathematical analysis, the simplicity of the method makes coding and implementation more transparent.
\section{THE BARYCENTER METHOD: RECURSIVE AND BATCH EXPRESSIONS} \label{sec:recurse-and-batch}

The barycenter method consists of searching for a minimizer of a function $f : \X \rightarrow \R$ using the formula
\begin{equation}
\label{eq:barycenter}
\hat{x}_n = \frac{\sum_{i=1}^n x_i \e^{- \nu f(x_i)}}{\sum_{i=1}^n \e^{- \nu f(x_i)}},
\end{equation}
which expresses the  center of mass of $n$ test points $x_i  \in \X \subset \R^{n_x}$ weighted according to the exponential of the  value of the function at each point. In this formula $\nu \in \R$ is a positive constant. The point $\hat{x}_n$ will be in the convex hull of the $\{x_i\}$, and we shall assume that the $n_x$--dimensional search space $\X$ is convex. The rationale behind it is that points where $f$ is large receive low weight in comparison with those for which $f$ is small. 

The equation above is equivalent to  the recursive formulas
\begin{align}
m_n & = m_{n-1} + \e^{-\nu f(x_n)} \label{eq:mass_update} \\
\hat{x}_n & = \frac{1}{m_n} \left( m_{n-1} \hat{x}_{n-1} + \e^{-\nu f(x_n)} x_n \right). \label{eq:bary_update} 
\end{align}
Here $m_0 = 0, \hat{x}_0 $ is arbitrary, and $x_n$ is the 
 sequence of test values.

From the point of view of recursive search strategies, it can be useful to pick the sequence of test points $x_n$ as the sum of the barycenter $\hat{x}_{n-1}$ of the previous points and a  ``curiosity'' or exploration term $z_n$:
\begin{equation}
x_n  = \hat{x}_{n-1} + z_n \label{eq:periergia}.
\end{equation}
Then \eqref{eq:bary_update} reads
\begin{equation}
\hat{x}_{n} -\hat{x}_{n-1}   = 
\frac{\e^{-\nu f(x_n)}}{m_{n-1} + \e^{-\nu f(x_n)}}z_n \label{eq:periergia2}.
\end{equation}

\section{MAIN RESULT: DESCENT PROPERTY OF THE BARYCENTER SEARCH} \label{sec:theorems}


{A randomized version} of the barycenter algorithm can be studied using  formula \eqref{eq:periergia2}.
If we consider $\hat{x}_n$ to be our best guess, on the basis of the information provided by the tests up to $x_n$, of where the minimum of $f(\cdot)$ might be found, then in the absence of any extra knowledge it makes sense to pick the curiosity $z_n$ as a random variable with some judiciously chosen probability distribution. 
In light of the central limit theorem, we will analyze the case where $z_n$ is  normal.

   With the goal of obtaining approximate formulas for the barycenter update rule, in the remainder of this section we  will assume  that $f(\cdot)$ is twice continuously differentiable with respect to the argument $x$. Differentiability is not required for the barycenter method to be useful, and this assumption can be weakened using Theorem~\ref{thm:under-noise}. 
   Define $F_n(z) = \frac{\e^{-\nu f(\hat{x}_{n-1} + z)}}{m_{n-1} + \e^{-\nu f(\hat{x}_{n-1} + z)}}$, and 
for subsequent use write
\begin{multline*}
\bar{F}_n (z) = 
  \frac{m_{n-1} \e^{-\nu f(\hat{x}_{n-1} + z)}}{(m_{n-1} + \e^{-\nu f(\hat{x}_{n-1} + z)})^2} \\= 
  \frac{m_{n-1} }{m_{n-1} + \e^{-\nu f(\hat{x}_{n-1} + z)}} F_n ,
  \end{multline*}
 so that $\frac{\partial F}{\partial z} = - \nu \bar{F} \frac{\partial f}{\partial z}.$ Here and in the computations that follow the subscript indicating dependence of the sample ordinality $n$ is omitted if there is no ambiguity.

\begin{theorem} \label{thm:average}
If $z_n$ has a Gaussian  distribution, the expected value of $\Delta\hat{x}_n = \hat{x}_n - \hat{x}_{n-1}$ is proportional to the average value of the gradient of $f(\hat{x}_{n-1} + z_n)$ in the support of the distribution of $z$.
\end{theorem}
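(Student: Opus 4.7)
The plan is to compute $E[\Delta\hat{x}_n]$ directly from equation~\eqref{eq:periergia2}. Substituting $x_n = \hat{x}_{n-1}+z_n$, we have $\Delta\hat{x}_n = F_n(z_n)\,z_n$, so with $z_n \sim \mathcal{N}(0,\Sigma)$ having density $p(z)$ we need to evaluate $\int F_n(z)\, z\, p(z)\, \d z$.

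The key tool is Stein's identity, a Gaussian integration by parts. For a zero-mean Gaussian with covariance $\Sigma$ one has $z\,p(z) = -\Sigma\,\nabla p(z)$, so integration by parts gives
\begin{equation*}
E[\Delta\hat{x}_n] \;=\; \int F_n(z)\, z\, p(z)\, \d z \;=\; \Sigma \int \nabla F_n(z)\, p(z)\, \d z \;=\; \Sigma\, E[\nabla F_n(z)].
\end{equation*}
Boundary terms vanish because $F_n$ is bounded in $[0,1]$ and $p$ decays at infinity, and twice continuous differentiability of $f$ ensures that $\nabla F_n$ exists and is integrable against $p$. I would apply this componentwise on the vector-valued integrand.

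To finish, I would invoke the identity $\partial F/\partial z = -\nu \bar{F}\,\partial f/\partial z$ recorded just before the theorem, which yields
\begin{equation*}
E[\Delta\hat{x}_n] \;=\; -\nu\, \Sigma\, E\!\left[\bar{F}_n(z)\, \nabla f(\hat{x}_{n-1}+z)\right].
\end{equation*}
This exhibits the mean step as $-\nu\Sigma$ times an average, taken over the support of $z$ with weighting density $\bar{F}_n(z)\,p(z)$, of the gradient $\nabla f(\hat{x}_{n-1}+z)$---which is the proportionality claimed, and shows moreover that the mean step points in an approximate descent direction.

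The main obstacle I anticipate is interpretive rather than technical: the resulting ``average gradient'' carries the extra weight $\bar{F}_n(z)$, which is not constant in $z$. One must either read the theorem in this weighted form, or justify replacing $\bar{F}_n(z)$ by the constant $\bar{F}_n(0)$ as a leading-order approximation valid when the curiosity variance is small, pulling the weight outside the expectation and recovering a clean proportionality between $E[\Delta\hat{x}_n]$ and the unweighted mean gradient $E[\nabla f(\hat{x}_{n-1}+z)]$ over the support of the distribution.
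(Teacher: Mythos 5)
Your proof is correct and takes essentially the same route as the paper: Gaussian integration by parts (Stein's identity) with vanishing boundary terms, followed by the identity $\partial F/\partial z = -\nu \bar{F}\,\partial f/\partial z$, giving $E[\Delta\hat{x}_n] = -\nu\,\Sigma\,E\bigl[\bar{F}_n(z)\,\nabla f(\hat{x}_{n-1}+z)\bigr]$. The only differences are that the paper works with a general (possibly nonzero) mean $\bar{z}$, which contributes the extra term $E[F_n(z)]\,\bar{z}$ in its boxed formula~\eqref{eq:approx-grad}, and it disposes of your interpretive worry simply by reading the theorem as a \emph{weighted} average of the gradient over the search domain, so no small-variance replacement of $\bar{F}_n(z)$ by a constant is needed.
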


\begin{proof}
The  claim is established by a  calculation, which in an appropriate notation is straightforward. Consider the probability density function
\[ 
p(z)= \frac{1}{\sqrt{(2\pi)^n |\Sigma|}} e^{- \frac{1}{2}(z- \bar{z})^T\Sigma^{-1}(z- \bar{z})}.
\]
Then $\frac{\partial p}{\partial z^\beta} = - \Sigma^{-1}_{\beta \alpha } (z^\alpha- \bar{z}^\alpha) p(z) $ so $z^\alpha p =  \bar{z}^\alpha p - \Sigma^{\alpha \beta} \frac{\partial p}{\partial z^\beta}$. Einstein's implicit summation of components with equal upper and lower indices convention is in force, with upper Greek indices for the components of $z$ and of $\Sigma$, and lower indices for the components of $\Sigma$'s inverse. 

With $\mathcal X$  the $n$--dimensional set where the curiosity $z$ takes its values, for each component $z^\alpha$ of the vector $z$ we have
\begin{multline*}
E \left [ F(z) z^\alpha \right]  = \int_{\mathcal X} F(z) z^\alpha p(z) \, \d z \\
= \int_{\mathcal X} F(z) \bar{z}^\alpha p(z) \, \d z 
- \Sigma^{\alpha \beta} \int_{\mathcal X} F(z)  \frac{\partial p}{\partial z^\beta}(z) \, \d z.
\end{multline*}
Using  integration--by--parts,
\begin{multline*}
 \int_{\mathcal X} F(z)  \frac{\partial p}{\partial z^\beta} \, \d z 
 +  \int_{\mathcal X} \frac{\partial F}{\partial z^\beta}   p(z) \, \d z \\
 =  \int_{\partial \mathcal X} F(z) p(z) \left (\frac{\partial}{\partial z^\beta} \contract \d z \right),
\end{multline*}
where $\frac{\partial}{\partial z^\beta} \contract \d z$ is an $(n-1)$--form which can be integrated at the boundary $\partial {\mathcal X}$ of the $n$--dimensional set $ {\mathcal X}$. The right--hand side is zero, because $F$ is bounded and  $p(z)$ vanishes at the border of $\mathcal X$, which is at infinity, hence
\begin{multline*}
E \left [ F(z) z^\alpha \right] 
= \bar{z}^\alpha \int_{\mathcal X} F(z)  p(z) \, \d z 
+\Sigma^{\alpha \beta} \int_{\mathcal X} \frac{\partial F}{\partial z^\beta} p(z) \, \d z\\
= E \left [ F(z)  \right] \bar{z}^\alpha 
-\nu  \Sigma^{\alpha \beta} \int_{\mathcal X} \bar{F} (z) \frac{\partial }{\partial z^\beta}\left( f(\hat{x}_{n-1} + z)\right)  p(z)\,  \d z,
\end{multline*}
so 
\begin{equation}
\label{eq:approx-grad}
\boxed
{
E \left [ \Delta\hat{x}_n \right] = E \left [ F_n (z)  \right]  \bar{z} -\nu  \Sigma \, E\left[ \bar{F_n}(z) \nabla f(\hat{x}_{n-1} + z) \right],
}
\end{equation} 
where the 2nd term is proportional to the negative gradient of $f$ as claimed. 
\end{proof}

Formula \eqref{eq:approx-grad} is a key result concerning the barycenter method. It shows that roughly speaking a random search performed in conjunction with the barycenter algorithm  follows   the direction of the negative average  gradient of the function to be minimized, the weighted average being taken over the domain where the search is performed. For a given $\nu$, the step size is essentially given by   $\Sigma$. Depending on the shape of the function $f$, large values of the variance of $z$ may compromise the descent property of the search method.

The term in $\bar{z}$ can be employed to incorporate extra knowledge in several manners. For example, if at each step we take $\bar{z} = \xi \; \Delta\hat{x}_{n-1}$, then the gradient term is responsible for the rate of change, or acceleration, of the search process. The factor $0 < \xi < 1$ is chosen to dampen oscillations and prevent instability. The case $\xi = 0$ corresponds to the garden--variety, non--accelerated gradient--like search.
With a view towards  intended applications to adaptive controller and filter tuning, here the author cannot resist citing his seldom--read paper \cite{acelera-letters} discussing tuners that set the 2nd derivative of the adjusted parameters, rather than the 1st derivative, as is more often done in the literature, as well as an application to filtering theory \cite{max-pait-jojoa}, where the 2nd difference is used in a discrete--time version.

The designer has the freedom to choose the free parameters $\nu$, $\Sigma$, and $\bar{z}$ of the randomized barycenter search procedure in order to achieve the most desirable convergence properties. 
{Understanding of the variance of}  $\Delta\hat{x}_n $, which depends on  the Hessian $\nabla^2 f$ of $f$, is  useful in picking these free parameters. 

\section{FURTHER PROPERTIES OF THE BARYCENTER SEARCH} \label{sec:further}

In this section we state 3 further results concerning the barycenter method. The proofs  are available in the arXiv \cite{2018bary-arXiv}.

Near a  minimum of a locally convex function,  the variance of the adjustment step grows less than linearly with the variance of the curiosity; the higher the Hessian and the larger $\nu$ is, the smaller the variance. This is a desirable property of the method, because it indicates that the barycenter moves around less than the test points. 

\begin{theorem} \label{thm:variance}
Under the conditions of Theorem~\ref{thm:average} and assuming that the variance of $z$ is small, the variance of $\Delta\hat{x}_n$ for $\bar{z} = 0$ near a critical point of $f(x)$ where $\nabla f= 0$ is approximately 
\begin{equation}
\label{eq:approx-variance}
\mathrm{Var} (\Delta \hat{x}) \approx 
 \Sigma E[F^2 ] - 2\nu \Sigma^T E\left[  F  \bar{F} \nabla^2 f \right] \Sigma.
\end{equation}
\end{theorem}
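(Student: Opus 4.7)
My plan is to extend the Gaussian integration-by-parts (Stein) technique from the proof of Theorem~\ref{thm:average} to the second-moment matrix of $\Delta\hat{x}_n = F_n(z)\,z$, which is the content of \eqref{eq:periergia2}. With $\bar{z} = 0$, the covariance splits as
\[
\cov(\Delta \hat{x})^{\alpha\beta} = E[F^2(z)\, z^\alpha z^\beta] - E[F(z)\, z^\alpha]\, E[F(z)\, z^\beta],
\]
and the target is to show that only the first term contributes at leading order in the small parameter $\Sigma$, with the Hessian emerging from a second round of integration by parts.

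First I would apply the Stein-type identity $E[g(z)\, z^\alpha] = \Sigma^{\alpha\gamma} E[\partial_\gamma g]$ (extracted from the proof of Theorem~\ref{thm:average} with $\bar{z} = 0$) to $g = F^2 z^\beta$. Differentiating inside the expectation and invoking the boxed relation $\partial F/\partial z = -\nu \bar{F}\, \partial f/\partial z$ gives
\[
E[F^2 z^\alpha z^\beta] = \Sigma^{\alpha\beta} E[F^2] - 2\nu \Sigma^{\alpha\gamma} E\bigl[F \bar{F}\, \partial_\gamma f \cdot z^\beta\bigr].
\]
A second application of the same identity, now with $g = F\bar{F}\, \partial_\gamma f$, peels off the remaining $z^\beta$ and the product rule produces three summands; only the one containing $\partial_\epsilon \partial_\gamma f$ contributes at leading order, yielding the Hessian factor $\Sigma^{\beta\epsilon} E[F\bar{F}\, \partial_\epsilon \partial_\gamma f]$. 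Reassembling in matrix form, with the symmetry of $\Sigma$ supplying the $\Sigma^T$ in \eqref{eq:approx-variance}, delivers the stated formula.

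What remains is to justify that the discarded pieces are of higher order in $\Sigma$ near the critical point. Since $\nabla f(\hat{x}_{n-1}) \approx 0$ and $\var(z)$ is small, a Taylor expansion yields $\partial_\gamma f(\hat{x}_{n-1} + z) = O(\|z\|)$; one more application of Stein then shows that $E[Fz^\alpha]$ is $O(\Sigma^2)$, so the subtracted product $E[Fz^\alpha] E[Fz^\beta]$ is $O(\Sigma^4)$ and is negligible beside the retained $O(\Sigma)$ and $O(\Sigma^2)$ terms. The same reasoning controls the two product-rule leftovers $E[\partial_\epsilon(F\bar{F})\, \partial_\gamma f]$, whose $\partial_\epsilon(F\bar{F})$ factor is itself proportional to $\partial_\epsilon f$ and thus $O(\|z\|)$ near the critical point. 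The main obstacle is precisely this bookkeeping of orders in $\Sigma$: one must verify that the cubic-and-higher Taylor remainder of $f$ and the latent $z$-dependence of $F$ and $\bar{F}$ do not resurrect any of the discarded contributions at the order at which the Hessian term is retained.
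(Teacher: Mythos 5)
Your proposal is correct and takes essentially the same route the paper intends: the paper defers the proof of Theorem~\ref{thm:variance} to the arXiv version, but the argument is the natural second-moment extension of the Theorem~\ref{thm:average} calculation, namely iterated Gaussian integration by parts (Stein's identity) combined with $\partial F/\partial z = -\nu \bar{F}\,\partial f/\partial z$, keeping the Hessian term and discarding the contributions proportional to $\nabla f$ near the critical point. Your order bookkeeping — the cross term $E[Fz^\alpha]E[Fz^\beta]$ being $O(\Sigma^4)$ and the product-rule leftovers $O(\Sigma^3)$ versus the retained $O(\Sigma)$ and $O(\Sigma^2)$ terms — correctly justifies the approximation in \eqref{eq:approx-variance}.
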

 \vspace{.5ex}
 
 The \emph{complex barycenter} is defined  term--by--term, for each component $x^\alpha$ of vector $x$, by the same formula as the barycenter in \eqref{eq:barycenter}, with a complex exponent $\nu$:
 \begin{equation}
\label{eq:barycenter-i}
{\eta}^\alpha_n = \frac{\sum_{i=1}^n x^\alpha_i \e^{-\nu f(x_i)}} {\sum_{i=1}^n \e^{-\nu f(x_i)}},
\end{equation}
but now our estimate of the extremum point is
 \begin{equation}
\label{eq:barycenter-modulus}
\hat{x}^\alpha_n = | \eta_n^\alpha |.
\end{equation}
In these formulas all $x_i \geq 0$. 
The algorithm is suggested by Feynman's interpretation of quantum electrodynamics \cite{feynman-qed,feynman2012quantum} and by the stationary phase approximation \cite{bleistein1975asymptotic,hormander2012analysis} used in the  asymptotic analysis of integrals. 

\begin{theorem} \label{thm:complex-interference}
The expected contribution of measurements made outside of any region where $\nabla f \approx 0$ is discounted by one factor, proportional to $\nabla f$ and to the ratio between the complex magnitude of $\nu$ and its real part, for each dimension of the search space.
\end{theorem}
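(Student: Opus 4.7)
The plan is to analyze the complex barycenter in the expected-value (integral) limit, decomposing the search domain into a neighborhood $R_0$ of points where $\nabla f \approx 0$ and its complement $R_1$, and to quantify how much the oscillatory factor $e^{-i \nu_I f}$ inside $e^{-\nu f}$ suppresses the $R_1$-contributions to both the numerator and the denominator of \eqref{eq:barycenter-i}. The underlying mechanism is the one behind the stationary-phase method: on $R_1$, iterated integration by parts forces the integrals to be small because cancellations in the complex phase accumulate, while on $R_0$ a local Gaussian computation around the critical points gives the dominant (unsuppressed) contribution.

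Concretely, on $R_1$ I would use the identity $e^{-\nu f} = -\frac{1}{\nu\,\partial_\beta f}\,\partial_\beta e^{-\nu f}$, valid whenever $\partial_\beta f \neq 0$, and integrate by parts once in each of the $n_x$ coordinate directions $\beta = 1, \ldots, n_x$. Each step contributes a prefactor of $1/(\nu\,\partial_\beta f)$ in the resulting integrand, plus a boundary term that can be absorbed (either because the sampling density $p$ decays at infinity, or because boundary contributions along $\partial R_1$ cancel against matching ones along $\partial R_0$). After the $n_x$ iterations, the expected contribution of $R_1$-measurements to both $\sum_i x^\alpha_i e^{-\nu f(x_i)}$ and $\sum_i e^{-\nu f(x_i)}$ picks up a multiplicative factor $\prod_\beta 1/(\nu\,\partial_\beta f)$: one factor per dimension, each proportional to the corresponding component of $\nabla f$.

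Taking moduli, the resulting suppression is $\prod_\beta 1/(|\nu|\,|\partial_\beta f|)$, whereas the same analysis for the ordinary real-$\nu$ barycenter only yields $\prod_\beta 1/(\nu_R\,|\partial_\beta f|)$; the extra discounting afforded by the complex version is therefore $\prod_\beta (\nu_R/|\nu|)$, which is exactly one factor per dimension, each proportional to $|\partial_\beta f|$ and to the ratio $|\nu|/\nu_R$, as the theorem asserts. The main obstacle is the clean disposal of the boundary terms and the careful treatment of the transition zone in which $\nabla f$ is small but nonzero; there the IBP estimate must be interpolated with a stationary-phase bound employing the Hessian, as in the classical proofs of the non-stationary phase and van der Corput lemmas. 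A subsidiary point is that the modulus in \eqref{eq:barycenter-modulus} couples numerator and denominator, but the common phase $e^{-\nu f(x_0)}$ attached to any dominant critical region cancels between them, so the per-dimension discount passes through the ratio and controls $|\eta^\alpha_n|$ as claimed.
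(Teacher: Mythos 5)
The paper itself does not reproduce the proof of Theorem~\ref{thm:complex-interference} (it is deferred to the arXiv version, with the stationary--phase approximation and the Gaussian--curiosity framework of Theorem~\ref{thm:average} as the stated guideposts), so the comparison has to be on the merits of your argument, and there are two genuine gaps in it. First, the mechanism you use to get ``one factor per dimension'' --- integrating by parts once in each coordinate direction with the identity $\e^{-\nu f}=-\frac{1}{\nu\,\partial_\beta f}\partial_\beta \e^{-\nu f}$ --- does not deliver a clean product $\prod_\beta 1/(\nu\,\partial_\beta f)$. After the first integration by parts the factor $1/(\nu\,\partial_\beta f)$ sits \emph{inside} the next derivative, so subsequent steps generate terms in which derivatives hit $1/\partial_\beta f$, producing Hessian-over-$(\partial f)^2$ contributions of the same order as the term you keep, plus boundary terms on $\partial R_1$ that are not small and cannot simply be ``absorbed'' without a smooth cutoff. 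Worse, coordinate-wise integration by parts requires \emph{every} component $\partial_\beta f$ to be bounded away from zero, which is much stronger than the theorem's hypothesis that the region lies outside $\nabla f\approx 0$: a point can have a large gradient with several vanishing components, and there your per-dimension prefactors blow up. The natural way to get a genuine per-dimension factorization (and the one consistent with the paper's setup, where the curiosity $z$ is Gaussian and $f$ is linearized as $f(\hat{x}+z)\approx f(\hat{x})+\nabla f\cdot z$) is to compute the expectation of the complex weight dimension by dimension after linearization, not to iterate multivariate integration by parts.

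Second, your final bookkeeping does not actually produce the factor the theorem asserts. Your own computation gives an absolute suppression $1/(|\nu|\,|\partial_\beta f|)$ per dimension and, relative to the real-$\nu$ case, an extra factor $\mathrm{Re}\,\nu/|\nu|$ per dimension --- a quantity that contains no $\nabla f$ at all and is the \emph{reciprocal} of the ratio in the statement; the closing sentence simply flips this ratio and re-inserts $|\partial_\beta f|$ without justification. To recover a discount ``proportional to $\nabla f$ and to $|\nu|/\mathrm{Re}\,\nu$'' you must fix the baseline against which the discount is measured, namely the unsuppressed contribution of a comparable region where $\nabla f\approx 0$, whose scale is set by the real amplitude $\e^{-\mathrm{Re}(\nu) f}$ (of order $1/\mathrm{Re}\,\nu$ per dimension); dividing that baseline by the oscillation-suppressed contribution $\sim 1/(|\nu|\,|\partial_\beta f|)$ is what yields a divisor $\propto |\partial_\beta f|\,|\nu|/\mathrm{Re}\,\nu$ per dimension. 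Your comparison with the real-$\nu$ barycenter is also conceptually off: for real $\nu$ there is no oscillatory cancellation, so high-gradient, low-$f$ points receive no gradient-dependent discount whatsoever --- which is precisely the deficiency the complex version is designed to remedy --- and quoting a ``real-case factor'' $\prod_\beta 1/(\nu_{\mathrm{R}}|\partial_\beta f|)$ obtained from the same formal manipulation obscures rather than establishes the claimed interference effect.
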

 
This destructive interference, so to speak, between repeated measurements near points which are \textsl{not} candidates for minimizers is the justification for employing complex values of $\nu$.

Oftentimes each measurement of the function $f$ at point $x_i$ is corrupted by noise or experimental errors. In this case we still would like to minimize $f$, but now using oracle answers $f(x_i) + w_i$. For the purpose of analyzing the effect of noise on the results of the barycenter method, we consider the sequence $x_i$ as given and $w_i$ as an ergodic random process. A more elaborate analysis of the effect of noise on the sequence $\{x_i\}$ itself, which would depend on the recursive search algorithm used to generate the oracle queries, isn't  done here.

Define the nominal or ``noise--free'' values 
$\bar{m} =  \sum_{i=1}^n \e^{-\nu f(x_i)}$ and 
$\bar{\eta} = { \sum_{i=1}^n x_i \e^{-\nu f(x_i)}}/ \bar{m}$, and also the scalar quantity $\bar{\bar{m}} = \sum_{i=1}^n \e^{-2\nu f(x_i)}$, the vector quantity $\bar{\bar{\eta}} = \sum_{i=1}^n x_i \e^{-2\nu f(x_i)}/ \bar{\bar{m}}$, and the matrix quantity $\breve{\eta} = \sum_{i=1}^n x_i x_i^T \e^{-2\nu f(x_i)} / \bar{\bar{m}}$.
\begin{theorem} \label{thm:under-noise}  Assuming that $\sigma$ is small, under the circumstances above the mean and variance of $\eta$ can be expressed approximately as follows:
\begin{align}
\label{eq:mean-eta}
E[\eta]  & \approx  \bar{\eta}  +
\frac{ \bar{\bar{m}} }{\bar{m}^2 } (\bar{\eta} - \bar{\bar{\eta}})  \nu^2 \sigma^2 
\text{ and } \\
\label{eq:var-eta}
\var[\eta] & \approx 
\frac{ \bar{\bar{m}} }{\bar{m}^2 }(\bar{\eta} \bar{\eta}^T   -  \bar{\eta} \bar{\bar{\eta}} - \bar{\bar{\eta}} \bar{\eta} + \breve{\eta})
 \nu^2 \sigma^2. 
\end{align}
\end{theorem}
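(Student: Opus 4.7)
The plan is to treat the measurement noise perturbatively and then apply the delta method to the ratio defining $\eta$. Factoring $e^{-\nu(f(x_i)+w_i)} = e^{-\nu f(x_i)} e^{-\nu w_i}$ and Taylor expanding $e^{-\nu w_i} = 1 - \nu w_i + \tfrac{1}{2}\nu^2 w_i^2 + O(w_i^3)$, the noisy numerator $N = \sum_i x_i e^{-\nu f(x_i)} e^{-\nu w_i}$ and denominator $D = \sum_i e^{-\nu f(x_i)} e^{-\nu w_i}$ each decompose into a deterministic baseline, a zero-mean linear fluctuation in the $w_i$, and a deterministic second-order drift proportional to $w_i^2$.

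Interpreting ergodicity of $\{w_i\}$ in the standard way as $E[w_i]=0$ and $E[w_i w_j] = \sigma^2 \delta_{ij}$, I would then compute to order $\sigma^2$ the moments $E[D] = \bar{m}(1 + \tfrac{1}{2}\nu^2\sigma^2)$, $E[N] = \bar{m}\,\bar{\eta}(1 + \tfrac{1}{2}\nu^2\sigma^2)$, together with the fluctuation moments $\var(D) \approx \nu^2\sigma^2\,\bar{\bar{m}}$, $\cov(N,D) \approx \nu^2\sigma^2\,\bar{\bar{m}}\,\bar{\bar{\eta}}$, and $\var(N) \approx \nu^2\sigma^2\,\bar{\bar{m}}\,\breve{\eta}$. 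The doubled exponents that define $\bar{\bar{m}}$, $\bar{\bar{\eta}}$, $\breve{\eta}$ arise naturally because in the double sums $\sum_{i,j} e^{-\nu f(x_i)} e^{-\nu f(x_j)} E[w_i w_j]$ the Kronecker delta collapses the two factors to $e^{-2\nu f(x_i)}$.

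Next I would apply the second-order delta method. Setting $D = E[D] + d$ and $N = E[N] + n$ with $d, n$ mean zero, and expanding $(E[D]+d)^{-1}$ as a geometric series to second order in $d/E[D]$, one obtains $E[\eta] \approx E[N]/E[D] + E[N]\,\var(D)/E[D]^3 - \cov(N,D)/E[D]^2$. The leading term equals $\bar{\eta}$ because the common scalar factor $1 + \tfrac{1}{2}\nu^2\sigma^2$ cancels between numerator and denominator, and the two remaining corrections combine into $(\bar{\bar{m}}/\bar{m}^2)(\bar{\eta}-\bar{\bar{\eta}})\nu^2\sigma^2$, giving \eqref{eq:mean-eta}. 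The matching second-order expansion for the covariance, $\var(\eta) \approx E[D]^{-2}\bigl[\var(N) - \bar{\eta}\,\cov(D,N) - \cov(N,D)\,\bar{\eta}^T + \bar{\eta}\bar{\eta}^T\var(D)\bigr]$, produces the symmetric four-term bracket $\breve{\eta} - \bar{\bar{\eta}}\bar{\eta}^T - \bar{\eta}\bar{\bar{\eta}}^T + \bar{\eta}\bar{\eta}^T$, again multiplied by $\bar{\bar{m}}\nu^2\sigma^2/\bar{m}^2$, which is \eqref{eq:var-eta} up to a harmless transpose convention in the stated bracket.

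The main obstacle is bookkeeping: one must retain every $O(\sigma^2)$ contribution, including the deterministic $w_i^2$ drift in $e^{-\nu w_i}$, and verify that this drift cancels exactly in the ratio $N/D$ rather than leaving a spurious bias in the leading term. Tracking vector versus row-vector placements so that all transposes land where the statement requires also needs care. A secondary point is justifying the delta-method truncation itself, which requires $D$ to concentrate tightly around $E[D]$; under the small-$\sigma$ hypothesis together with mild tail assumptions on the $w_i$ this is standard, and since the theorem is stated as an approximation, a brief remark on the regime of validity will suffice.
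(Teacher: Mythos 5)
Your derivation is correct, and it is essentially the intended argument: the paper itself only states this theorem and defers the proof to the arXiv version, but the very form of the stated result --- the doubled-exponent quantities $\bar{\bar{m}}$, $\bar{\bar{\eta}}$, $\breve{\eta}$ and the common factor $\nu^2\sigma^2\,\bar{\bar{m}}/\bar{m}^2$ --- is exactly what your second-order expansion of $\e^{-\nu w_i}$ followed by the second-order delta method for the ratio $N/D$ produces, including the cancellation of the deterministic $\tfrac12\nu^2\sigma^2$ drift in the leading term $\bar{\eta}$. Your explicit reading of the ergodicity hypothesis as zero-mean noise with $E[w_i w_j]=\sigma^2\delta_{ij}$ is the assumption the Kronecker-delta collapse to $\e^{-2\nu f(x_i)}$ requires, and your symmetric four-term bracket is the correct transpose-consistent reading of the looser notation in \eqref{eq:var-eta}.
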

 \vspace{1ex}

What we learn from somewhat involved formulas~\eqref{eq:mean-eta}
 and \eqref{eq:var-eta} is that noise or measurement errors generate a bias and a variance in the barycenter, both proportional in 1st approximation to the variance of the noise. These  effects conspire to pull the barycenter away from the looked--after minimum of the function. However one may derive a measure of comfort in that the unwelcome errors tend to zero as the noise becomes smaller. 
Theorem~\ref{thm:under-noise} indicates that there is little reason to fear that the method breaks down under moderate measurement or computing errors.

The situation where the (unknown) function to be optimized is not smooth can be studied as a particular case of the optimization of $f(x) + w$, where $f(\cdot)$ itself is smooth, and $w(x)$ is the difference between the function under consideration and its smooth approximation, plus a noise or error parcel when applicable. 
In this case the assumption that $w_i$ is uncorrelated with $x_i$ can be objected to. On the other hand any function can be approximated with arbitrary precision by a smooth function, making 
the analysis reasonable   in the practical case when   
 the approximation error is overshadowed by  measurement or  numerical errors.

\section{ILLUSTRATIVE SIMULATIONS} \label{sec:simula}

In this section we will choose $z$ with mean  $ \xi \; ( \hat{x}_n - \hat{x}_{n-1})$ and variance $\sigma^2$. The idea is that the random term is responsible for the rate of change, or acceleration, of the search process. The factor $0 < \xi < 1$ is chosen to dampen oscillations and prevent instability. 

Figure~\ref{fig:banana-search1} depicts the functional values over number of evaluations of a search for the minimum of  Rosenbrock's nonconvex banana function $f(x) = 100 \left(x^2-y\right)^2+(1-x)^2$ using the randomized barycenter method. 
Figure~\ref{fig:banana-search2} shows the test points in search space and the evolution of the barycenter, starting from the initial guess and randomly sliding down the level curves towards the minimum at $(1,1)$. The randomized barycenter search parameters were chosen $\nu = 4, \xi = 0.6$, and the variance $\sigma$ starts at 2 and decreases linearly to .4 over the 80 sample tests.

The instance of the search procedure is typical of many tests performed, although of course the results of any nondeterministic search may vary, and the search could conceivably  fail for an unlucky finite number of samples. We have not made any effort to optimize the search, but the results are roughly speaking comparable to the more effective methods tested in, for example, \cite{derivative-free-book}, page 11. 

\begin{figure}[!htb]
\centering
\includegraphics[width=\linewidth]{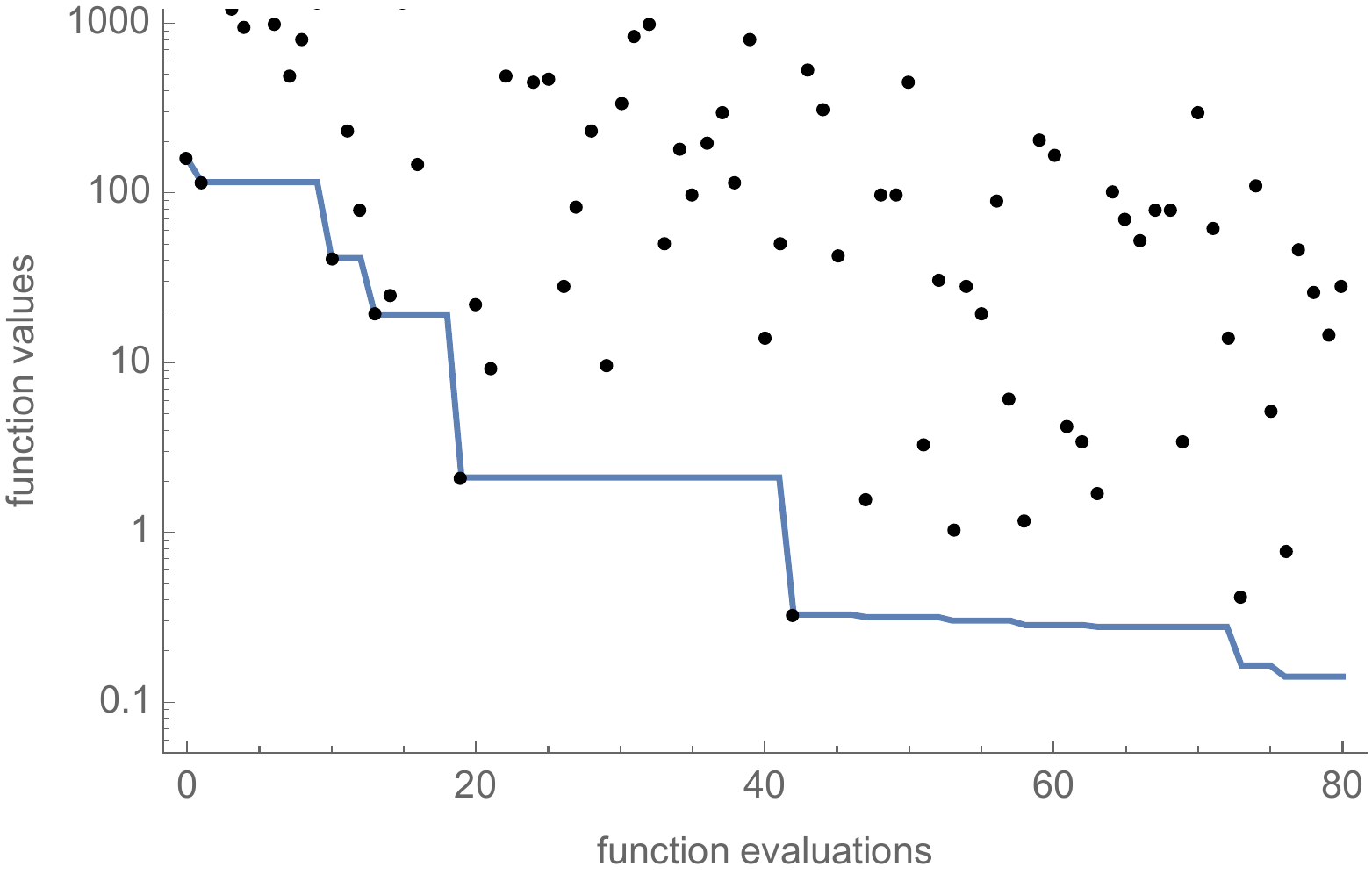}
\caption{Points show values of the banana function at the test points $x$, line shows function values at the estimates $\hat{x}$.}
\label{fig:banana-search1}
\end{figure}

\begin{figure}[!htb]
\centering
\includegraphics[width=\linewidth]{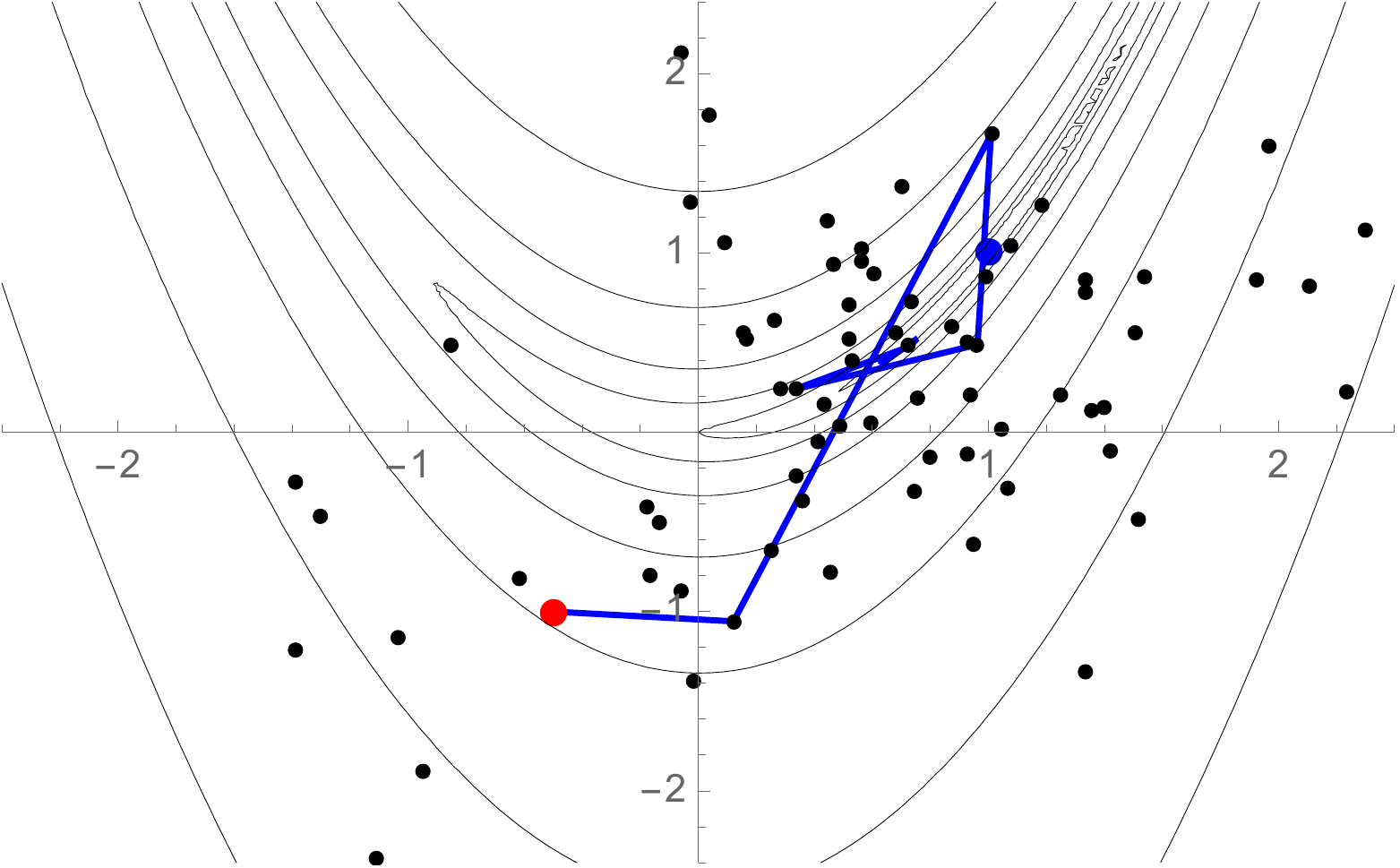}
\caption{Points show location of tests, line shows barycenter of previous points across level curves of banana function.}
\label{fig:banana-search2}
\end{figure}

%

Another test, shown analogously in Figures~\ref{fig:perturbed-search1} and \ref{fig:perturbed-search2}, concerns the perturbed quadratic function 
\begin{multline*}
10 x^2 \left(1 + \frac{75}{100} \frac{\cos (70 x)}{12}\right) + \frac{\cos (100  x)^2}{24}  \\
+ 
2 y^2 \left(1 + \frac{75}{100} \frac{\cos (70 y)}{12}\right) + \frac{\cos (100 y)^2}{24} + 4 x y,
\end{multline*}
also mentioned in \cite{derivative-free-book}. Although minimization of the function with high--frequency sinusoidal perturbations poses challenges to some derivative--free algorithms, the barycenter method performs comparably to the more effective of them. The parameters for the test that involved 100 function value estimations were $\nu = 1, \xi = 8/10$, and  variance $\sigma$ starting at 24/10 and decreasing with $966/1000$  at each  sample test. 

\begin{figure}[!htb]
\centering
\includegraphics[width=\linewidth]{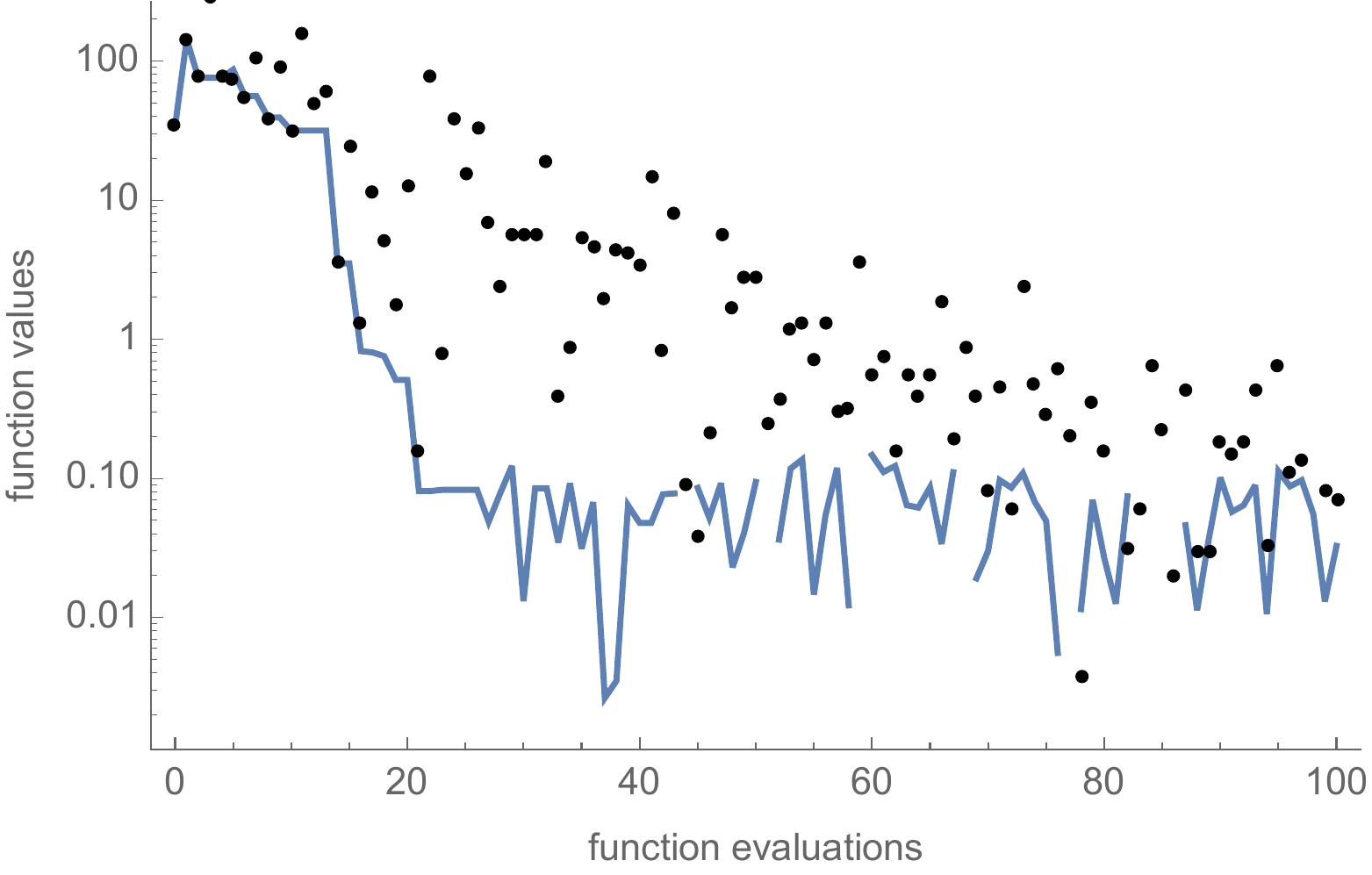}
\caption{Values of the perturbed quadratic function at $x$ and $\hat{x}$. Values missing from log plot are negative.}
\label{fig:perturbed-search1}
\end{figure}

\begin{figure}[!htb]
\centering
\includegraphics[width=\linewidth]{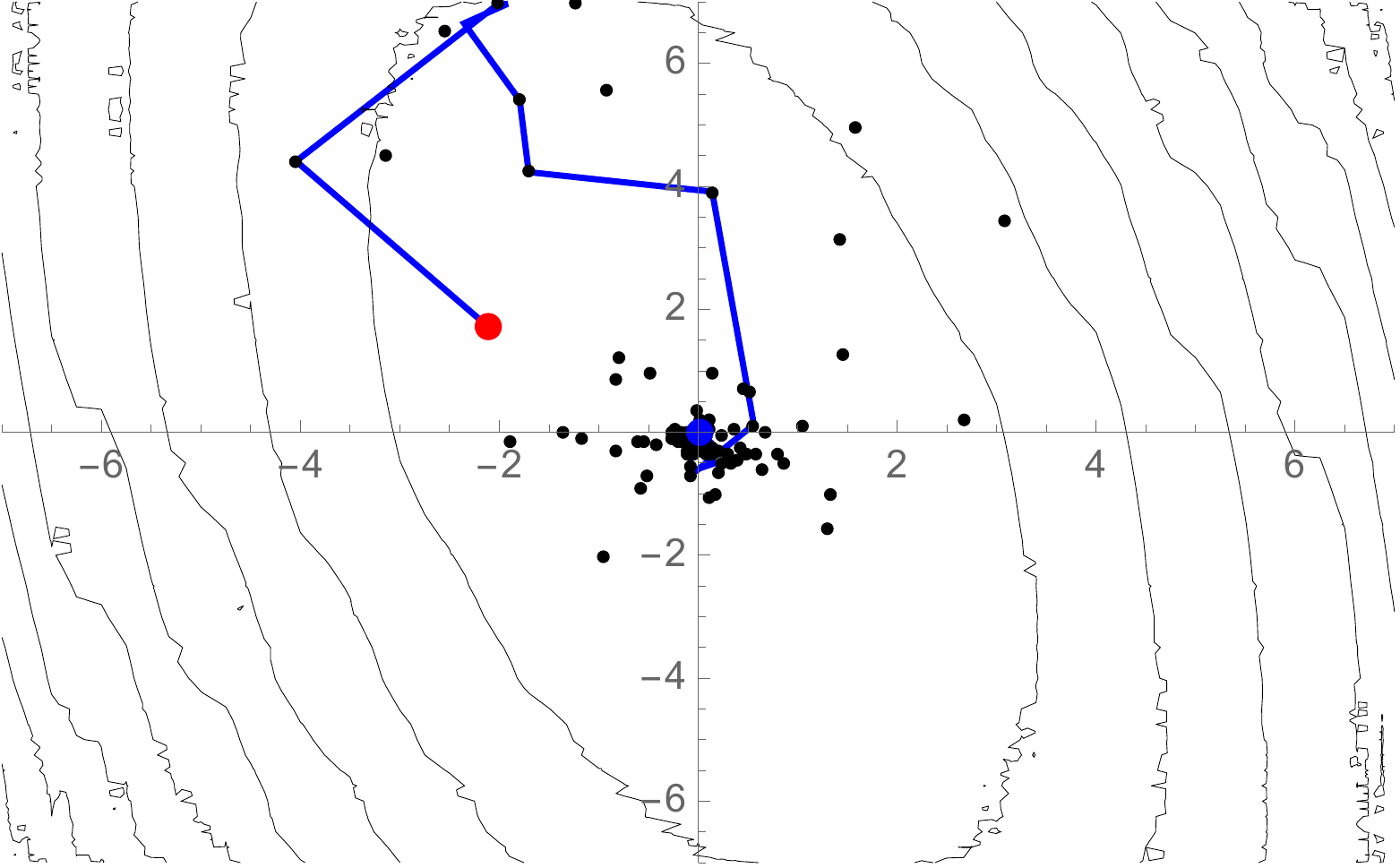}
\caption{Evolution of search for minimum of perturbed quadratic function.}
\label{fig:perturbed-search2}
\end{figure}

Figures~\ref{fig:canoe-search1} and \ref{fig:canoe-search2}  illustrate the barycenter search for the minimum of the ``canoe'' function $(1-e^{||x||^2}) \max ( ||x-c||^2,||x-d||^2)$, with $c = - d = [30, 40]^T$. This function was introduced as a benchmark to test mesh adaptive direct search  algorithms in page 209 of \cite{mesh-adaptive-canoe-audet}, as
it might present a challenge to derivative--based optimization methods (both 1st and 2nd order) and also to generalized pattern search (GPS) because of lack of differentiability. The derivative--free method under consideration is applicable to non--differentiable functions, and gives satisfactory results with  parameters $\nu = 8/10, \xi = 9/10$, and  variance $\sigma$ starts at 1 and decreases with $(982/1000)^n$  over 300 sample tests.

In the tests, the barycenter method parameters were chosen by the author with basis on aesthetic and didactic considerations, without the benefit of so--called  ``graduate student descent.'' Notice that some searches are nonmonotonic. A reliably monotonic search would be a sure indicator of lack of robustness to model noise or variability. Simulations were performed with exact  values in Mathematica\texttrademark, and were  fast in the low--dimensional problems studied. 

%
%

\begin{figure}[!htb]
\centering
\includegraphics[width=\linewidth]{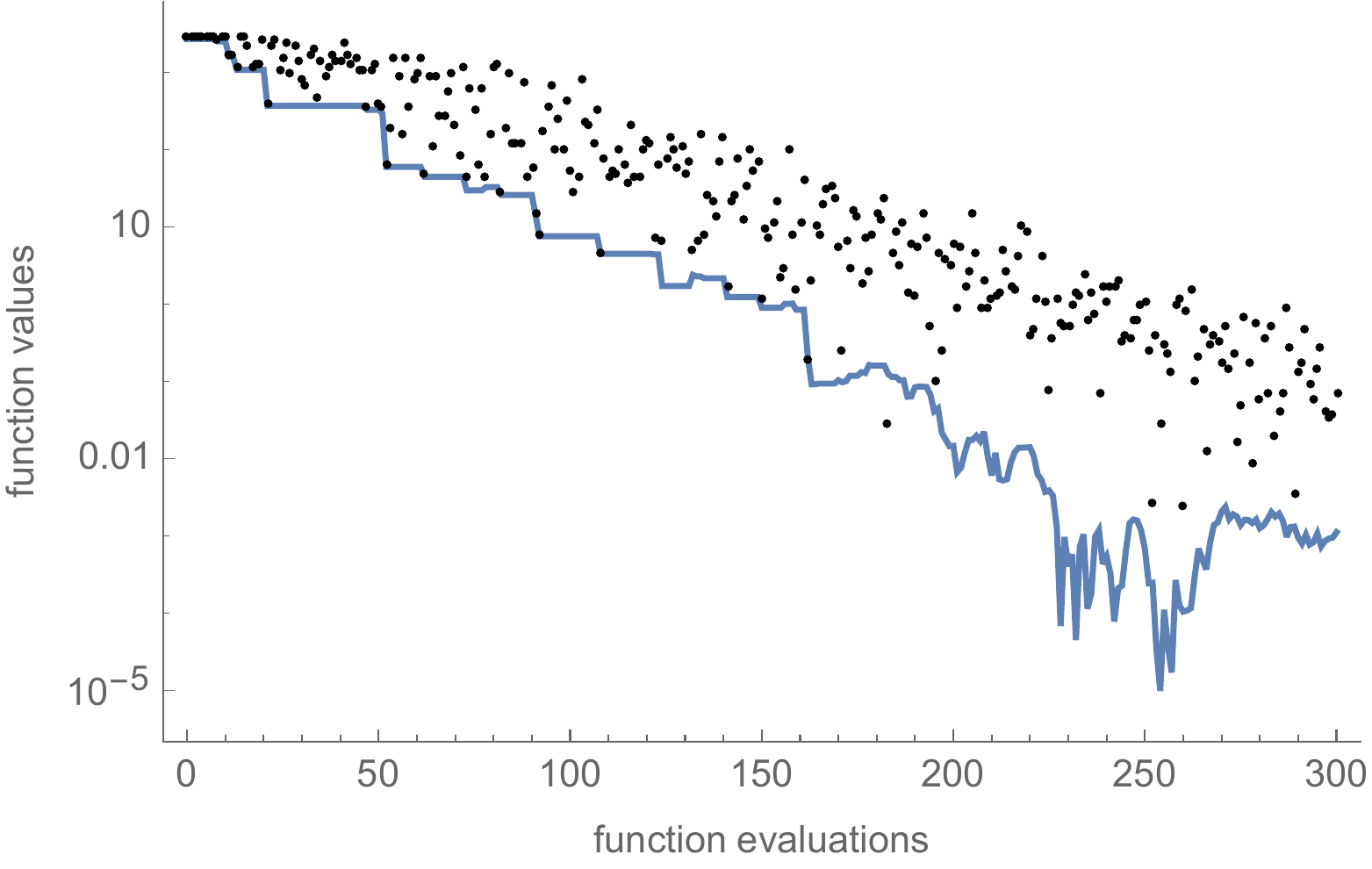}
\caption{Canoe function values at  test points and estimates.}
\label{fig:canoe-search1}
\end{figure}

\begin{figure}[!htb]
\centering
\includegraphics[width=\linewidth]{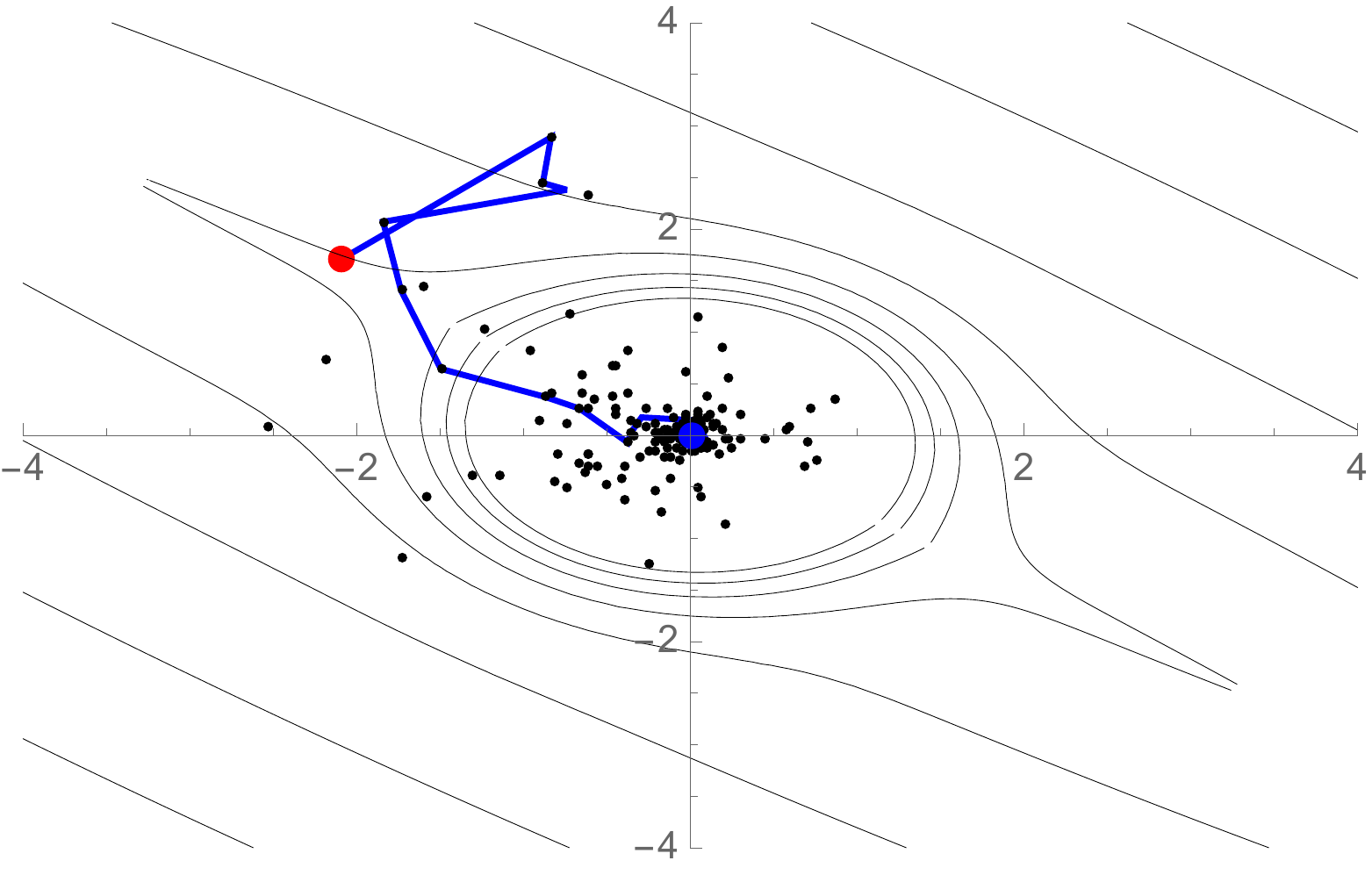}
\caption{Evolution of tests across level curves of canoe function.}
\label{fig:canoe-search2}
\end{figure}

\section{CONCLUSIONS AND FUTURE WORKS} \label{sec:discussion}

The theoretical properties of the method, stated in Sections~\ref{sec:theorems} and \ref{sec:theorems}, and demonstrated in full in \cite{2018bary-arXiv}, have been illustrated via simulations. The method shows itself to be robust and have acceptable performance in 2--dimensional benchmark problems. The barycenter method is  easy to code, and can be modified to adapt to  requirements, incorporating previous knowledge of a specific problem under study.  Performance could be optimized by using anything smarter than the purely random search we employed, the method's properties being guaranteed by the equivalence between the recursive and batch formulas in Section~\ref{sec:recurse-and-batch}. The method is also naturally parallelizable. Tests of a parallel version, as well as of the complex version presented in equations \eqref{eq:barycenter-i}-- \eqref{eq:barycenter-modulus} with properties given in Theorem~\ref{thm:complex-interference}, remain to be performed. The method is already in use for practical applications along the lines discussed in \cite{MOLI-transactions} and \cite{RomanoCDC:2014}. 

\subsection{Application to direct adaptive control}

I would like to end this paper sketching an application of the barycenter method to direct adaptive control. Given a control system with input $u$ and output $y$, one can design a dynamic feedback control law parameterized by some vector $\theta$, using standard tools from linear system theory. 
Assuming that there exists a parameterized controller that stabilizes the system, a form of \textsl{direct} adaptive control  consists in searching for  such a value of $\theta$,  without the intermediate step of estimating a system model.

This could be effected by trying to minimize a certain performance goal. For the sake of concreteness consider a familiar quadratic cost of the form
\[
J_i(\theta_i) = \int_{t_{i-1}}^{t_{i}} y^T Q y + u^T R u \, \d t,
\]
computed over an interval $[{t_{i-1}}, t_{i})$ during which the control applied is a constant feedback  with parameter $\theta_{i}$. 
In the linear--quadratic framework, it is possible to construct the feedback controller \cite{design-direct} such that the cost function is convex in $\theta$, up to terms depending on noise and perturbations. 

The control tuning problem at hand is nothing but a direct optimization problem, with the additional complication that the goal function is time--varying besides depending on $\theta$. The  controlled process works as a $0$--order oracle: when queried it supplies the value of the function to be optimized, but not of its 1st or 2nd derivatives. The sequence of feedback parameters $\theta_i$ and the corresponding costs $J_i(\theta_i)$ play the roles, respectively,  of the test points $x_i$ and the function $f(x_i)$ in the previous sections.  

A direct control algorithm would, at each instant $t_i$, pick a parameter vector $\theta_{i+1}$, which specifies the  controller that will be used to close the feedback loop during the subsequent interval $[{t_{i}}, t_{i+1})$. The choice of $\theta_{i+1}$ has to be a compromise between 2 goals: exploration of the parameter space, and convergence towards a minimum of  the cost function.  The main challenge is obtaining sufficiently fast convergence of the cost while applying controllers which can drive the system to unstable modes. This precludes the use of search methods with prespecified sequence of controllers. The properties of the barycenter method make it suitable to such an application. 

An advantage of the barycenter method is the possibility of combining directly tuned feedback gains with controllers designed via system identification tools. This would be a form of so--called data--driven, on--the--fly reconfiguring of a controller. It is very much along the lines of a combined direct + indirect adaptive controller synthesis.




\onecolumn

\bibliographystyle{IEEEtranS}

\bibliography{/Users/pait/paraguassu/latex/bibtex/bary,%
/Users/pait/paraguassu/latex/bibtex/acelera,%
/Users/pait/paraguassu/latex/bibtex/MIMOsysid}


\vfill


\begin{figure*}[!b]
\centering 
{\large The author is deeply grateful for João Cabrera's friendship \\
and for our conversations about control theory and  \\
Brazilian soccer. His memory shall be a blessing. \\}

\vspace{5ex}

\includegraphics[width=7cm]{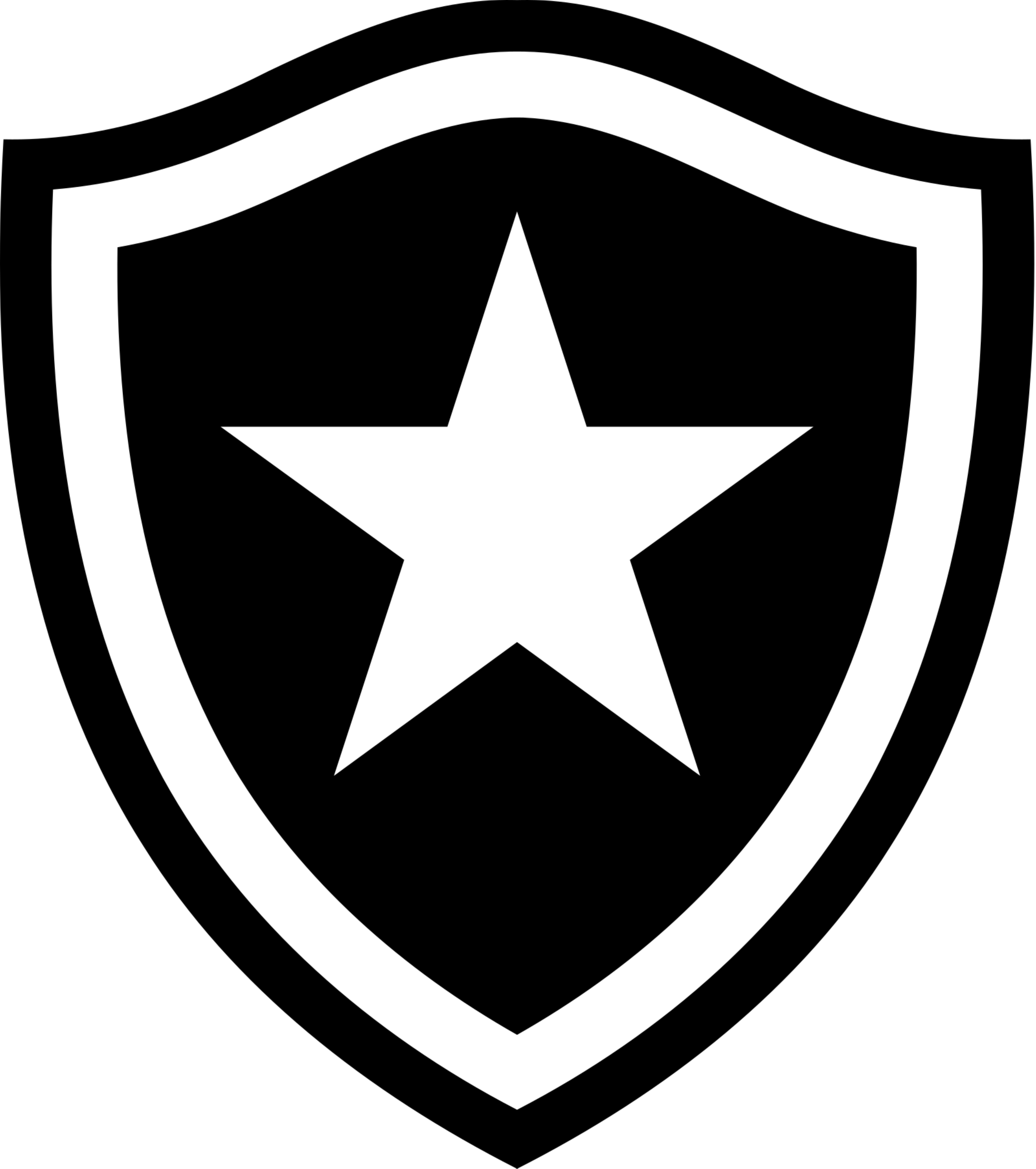}
{ \vspace{2ex}
 \\
\large{Tu és o Glorioso, \\
Tua Estrela Solitária te conduz.}}
\label{fig:botafogo}
\end{figure*}


\end{document}